\newtheorem{thm}{Theorem}
\newtheorem{lem}{Lemma}
\newcommand{\D}{\rm d}
\title{Improved bounds on Brun's constant}
\author{Dave Platt\footnote{Supported by Australian Research Council Discovery Project DP160100932 and  EPSRC Grant EP/K034383/1.}\\ School of Mathematics \\ University of Bristol, Bristol, UK\\dave.platt@bris.ac.uk\and  Tim Trudgian\footnote{Supported by Australian Research Council Discovery Project DP160100932 and Future Fellowship FT160100094.} \\
School of Physical, Environmental and Mathematical Sciences\\ The University of New South Wales Canberra, Australia \\
  t.trudgian@adfa.edu.au}
\begin{document}
\maketitle
\centerline{
Dedicated to the memory of Jon Borwein.}
\begin{abstract}
\noindent
Brun's constant is $B=\sum_{p \in P_{2}} p^{-1} + (p+2)^{-1}$, where the summation is over all twin primes. We improve the unconditional bounds on Brun's constant to $1.840503 < B < 2.288513$, which is about a 13\% improvement on the previous best published result.
\end{abstract}

\section{Introduction}
Brun \cite{Brun} showed that the sum of the reciprocals of the twin primes converges. That is, if $P_{2}$ denotes the set of primes $p$ such that $p+2$ is also prime, the sum $B:= \sum_{p\in P_{2}} 1/p + 1/(p+2)$ is finite. 

Various estimates for Brun's constant have been given based on calculations of $\pi_{2}(x)$, where $\pi_{2}(x)$ denote the number of twin primes not exceeding $x$ --- see Brent \cite[pp.\ 50--53]{Brent} and Klyve \cite[Table 1.2.3]{Klyve} for some historical references. Brent \cite{Brent} computed $\pi_{2}(8\cdot 10^{10}) = 182\, 855\, 913$, and, conditional on some assumptions about the random distribution of twin primes, conjectured that
\begin{equation}\label{sock}
B = 1.9021604\pm 5 \cdot 10^{-7}.
\end{equation}
Additional computations were undertaken by Gourdon and Sebah \cite{GS} and Nicely\footnote{We cannot resist referencing an anecdote from Jon Borwein himself (and his co-authors). In \cite[p.\ 40]{OM} Borwein, Borwein, and Bailey talk of Nicely's calculations on Brun's constant. Nicely discovered a bug in an Intel Pentium chip, which, according to \cite{OM} `cost Intel about a billions dollars'. We believe Jon would have seen this as an excellent application of pure mathematics in the modern world.} \cite{Nicely}, who showed 
\begin{equation}\label{cough}
\pi_{2}(2\cdot 10^{16}) =  19\,831\,847\,025\,792.
\end{equation}
Additionally, Nicely conjectured that
\begin{equation}\label{shoe}
B = 1.90216 05832 09 \pm 0.00000 00007 81.
\end{equation}

As far as we are aware the most comprehensive results on the enumeration of $\pi_{2}(x)$ are by Tom\'as Oliveira e Silva \cite{Silva}, who computed $\pi_2(k\cdot 10^{n})$ for $k=1,\ldots, 10\,000$ and $n=1,\ldots, 14$ and $\pi_2(k\cdot 10^{15})$ for $k=1,\ldots, 4\,000$.

Some explanation is required for these conjectured bounds in (\ref{sock}) and (\ref{shoe}). These results are not strict error bounds, but rather, confidence intervals (in the probabilistic sense). One can obtain a lower bound on $B$ by merely summing $B(N):= \sum_{p\in P_{2}, p \leq N} 1/p + 1/(p+2)$ for large values of $N$. One can then plot this as a function of $N$, make assumptions about the random distribution of twin primes, and try to ascertain the rate of convergence. This is what has been done by Brent, Nicely, and others.

It is another matter to ask for a rigorous upper bound for Brun's constant; clearly computing the sum $B(N)$ for any $N$ gives a lower bound. The first upper bound appears to be given by Crandall and Pomerance \cite{Crandall}, who showed that $B< 2.347$. An excellent exposition of their proof is given in a thesis by Klyve \cite{Klyve} who also shows that under the assumption of the Generalised Riemann Hypothesis we have $B< 2.1754$.

It is perhaps curious that the method of Crandall and Pomerance produces an upper bound for $B$ that is dependent on the lower bound.  When one increases the value of $N$, the corresponding increase in $B(N)$ yields a better upper bound for $B$.

In this paper we do two things: we compute $B(N)$ for a larger $N$ than was done previously, and using some optimisation improve the upper bound for $B$. The result is

\begin{thm}\label{jack}
$1.840503 < B < 2.288513$.
\end{thm}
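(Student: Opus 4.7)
The strategy is to split $B = B(N) + T(N)$ for a cutoff $N$ as large as can be treated computationally, handle $B(N)$ by direct enumeration, and estimate the tail $T(N) = \sum_{p \in P_2,\, p>N} (1/p + 1/(p+2))$ by analytic means.

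For the lower bound, I would sieve all twin primes up to the largest $N$ feasible (pushing beyond the range treated by Nicely and by Oliveira e Silva) and accumulate $B(N)$ in interval arithmetic, producing a rigorous inequality of the form $B(N) > 1.840503$. Since $T(N) > 0$, this immediately gives $B > 1.840503$.

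For the upper bound, starting from $1/p + 1/(p+2) \le 2/p$ and applying Abel summation,
$$T(N) \le \sum_{\substack{p \in P_2 \\ p > N}} \frac{2}{p} = -\frac{2\pi_2(N)}{N} + \int_N^\infty \frac{2\pi_2(t)}{t^2}\,dt,$$
so an explicit Brun--Selberg bound of the form $\pi_2(t) \le C t/(\log t)^2$ integrates to an explicit estimate for $T(N)$. To sharpen this, I would use Oliveira e Silva's tabulated exact values of $\pi_2$ on an intermediate range $[N,M]$ (these are strictly sharper than any sieve bound) and invoke the explicit sieve bound only on $[M,\infty)$, then optimise the parameters $N$ and $M$ (and possibly introduce further intermediate breakpoints) so that $B(N) + T(N) < 2.288513$. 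This is essentially the Crandall--Pomerance strategy mentioned in the introduction, with the added flexibility of Oliveira e Silva's denser table of $\pi_2$ values.

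The main obstacle is analytic: the explicit constant $C$ (and the lower-order terms in an explicit version of the Brun--Selberg bound) is the limiting ingredient, so improvements in the upper bound for $B$ come either from pushing $N$ up or from tightening the constant in the sieve estimate, and the two trade off against each other during the optimisation. A secondary challenge is computational, namely running the twin-prime sieve far beyond published ranges while guaranteeing in interval arithmetic that the accumulated $B(N)$ is rigorous, together with the careful bookkeeping needed to splice the computed checkpoints of $\pi_2$ into the tail integral without introducing losses.
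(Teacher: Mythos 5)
Your proposal follows essentially the same route as the paper: the decomposition $B = B(x_0) + T(x_0)$, the Abel-summation identity $T(x_0) \le -2\pi_2(x_0)/x_0 + \int_{x_0}^\infty 2\pi_2(t)t^{-2}\,dt$, and an explicit sieve bound for the tail (the paper uses Riesel--Vaughan's $\pi_2(x) < 8Cx/(\log x(\log x + F(x))) + 2\sqrt{x}$, sharpened via Preissmann's large-sieve constant and an improved divisor-sum estimate). The only notable difference is that the paper does not re-sieve: it takes $x_0 = 4\cdot 10^{18}$ and bounds $B(x_0)$ above and below using only Oliveira e Silva's block counts of $\pi_2$ (each pair counted in a block contributes between $2$ divided by the block's upper endpoint and $2$ divided by its lower endpoint), which costs only about $1.5\cdot 10^{-5}$ and avoids an infeasible direct enumeration of the roughly $3\times 10^{15}$ twin-prime pairs below $4\cdot 10^{18}$.
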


The previous best lower bound was computed by Nicely \cite{Nicely}, who, using his calculations of (\ref{cough})  showed that $B(2\cdot10^{16})> 1.831808$. We remark that the lower bound of $B(10^{16})>1.83049$ by Gourdon and Sebah \cite{GS} was used in the proof of Crandall and Pomerance. 

In \S \ref{compB} we give details of using the tables by Oliveira e Silva in \cite{Silva} to compute $B(4\cdot 10^{18})$. This proves the lower bound in Theorem \ref{jack}. We remark here that this computation on its own would give an upper bound of $2.292$ in Theorem \ref{jack}.

In \S \ref{Prep} we list two results in the literature, one an explicit bound on a sum of divisors, and another an improvement on a sieving inequality used by Montgomery and Vaughan \cite{MV}. In \S \ref{RVsec} we introduce Riesel and Vaughan's bounds for $\pi_{2}(x)$. Finally, in \S \ref{chalk} we perform our calculations that prove the upper bound in Theorem \ref{jack}, and outline some of the difficulties facing future investigations into this problem.

\section{Preparatory results}\label{Prep}
We require two results from the literature. The first is an explicit estimate on $\sum_{n\leq x} d(n)/n$, where $d(n)$ is the number of divisors of $n$; the second is a large-sieve inequality.

\subsection{Bounds on sums of divisors}

The classical bound on   $\sum_{n\leq x} d(n)$ and partial summation show that
\begin{equation}\label{hearts}\sum_{n\leq x} \frac{d(n)}{n} \sim \frac{1}{2} \log^{2} x.
\end{equation}
It is also possible to give an asymptotic expansion of the above relation. First, for $k$ a non-negative integer, define the Stieltjes constants $\gamma_{k}$ as
\begin{equation*}
\gamma_{k} = \lim_{N\rightarrow\infty} \left\{- \frac{(\log N)^{k+1}}{k+1} + \sum_{n\leq N} \frac{(\log n)^{k}}{n} \right\}.
\end{equation*}
Here $\gamma_{0} = \gamma$, which is Euler's constant. In what follows we only need the following bounds: more precision is possible, but the estimates in (\ref{dodo}) are more than sufficient.
\begin{equation}\label{dodo}
0.5772156< \gamma_{0} < 0.5772157, \quad -0.0728159 < \gamma_{1} < -0.0728158.
\end{equation}
 Riesel and Vaughan give a more refined estimation of (\ref{hearts}), namely, if
\begin{equation}\label{queen}
E(x) = \sum_{n\leq x} \frac{d(n)}{n} - \frac{1}{2} \log^{2} x - 2 \gamma_{0} \log x - \gamma_{0}^{2} + 2 \gamma_{1},
\end{equation}
then by Lemma 1 \cite{RV}
\begin{equation}\label{king}
|E(x)| < 1.641 x^{-1/3}, \quad (x>0).
\end{equation}
We note that an improvement to this is claimed in Corollary 2.2 in \cite{Berk} which gives
\begin{equation*}
|E(x)| < 1.16 x^{-1/3}, \quad (x>0).
\end{equation*}
This, however, appears to be in error, since, as shown in \cite[p.\ 50]{RV} the error $|E(x)|x^{1/3}$ has a maximum of $-1.6408\ldots$ around $7.345\cdot 10^{-4}$. It is possible to improve (\ref{king}) by choosing an exponent smaller than $-1/3$. While this has only a minor impact on the estimation of Brun's constant, we record it below as it may be of interest elsewhere.
\begin{lem}\label{tea}
Let $E(x)$ be as in (\ref{queen}). Then, for all $x>0$ we have $|E(x)| \leq 1.0503 x^{-2/5}$.
\end{lem}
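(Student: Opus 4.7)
The goal is to bound $g(x) := |E(x)|x^{2/5}$ uniformly by $1.0503$ on $(0,\infty)$. The plan is to imitate the Riesel--Vaughan proof of (\ref{king}), but with the exponent $1/3$ replaced by $2/5$. As a sanity check: near the R--V maximum $x_0 \approx 7.345\cdot 10^{-4}$ of $|E(x)|x^{1/3}$, one has $g(x_0) \approx 1.6408 \cdot x_0^{1/15} \approx 1.6408 \cdot 0.62 \approx 1.02$, slightly below the target, so the claim is plausible with only a thin margin.

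I would split $(0,\infty)$ into three ranges. For $0 < x < 1$, the defining sum in (\ref{queen}) is empty, so $E(x) = -\tfrac12\log^2 x - 2\gamma_0\log x - \gamma_0^2 + 2\gamma_1$ is an elementary function; since $x^{2/5}\log^2 x \to 0$ as $x\to 0^+$, the boundary behaviour is harmless, and the interior maximum of $g$ on $(0,1)$ can be located by differentiation and estimated rigorously using the bounds (\ref{dodo}). For $1 \leq x \leq X_0$ with a fixed moderate threshold $X_0$, the function $\sum_{n\leq x} d(n)/n$ is piecewise constant with jumps only at integers, so $g$ is smooth on each unit interval and its maximum over the finitely many intervals is a finite computation, which can be made rigorous by interval arithmetic.

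For $x \geq X_0$ I need a bound sharper than (\ref{king}). Starting from Dirichlet's divisor formula $\sum_{n \leq t} d(n) = t\log t + (2\gamma_0 - 1) t + \Delta(t)$ and applying Abel summation yields the identity
\begin{equation*}
E(x) = \frac{\Delta(x)}{x} - \int_x^\infty \frac{\Delta(t)}{t^2}\,dt,
\end{equation*}
so that any explicit bound of the form $|\Delta(t)| \leq c\sqrt{t}$ (classically available, e.g. from Voronoi-type work) gives $|E(x)| \leq 3c\, x^{-1/2}$. Hence $g(x) \leq 3c\, x^{-1/10}$, which is below $1.0503$ once $X_0$ is taken large enough.

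The main obstacle is calibration: $X_0$ must be small enough for the intermediate-range check to be computationally tractable, while the explicit constant $c$ in the $\Delta$-bound must be sharp enough for the large-$x$ tail to be $\leq 1.0503$ from that same $X_0$ onward. Because the true decay $|E(x)| = O(x^{-1/2})$ is substantially faster than the target rate $x^{-2/5}$, even a fairly loose explicit bound on $\Delta(t)$ should keep $X_0$ modest, so this should not be binding. The tight constant $1.0503$ will then be forced by the small-$x$ and intermediate-$x$ maxima rather than by the large-$x$ tail, and the precise value will emerge from a careful numerical search near the R--V peak $x_0$.
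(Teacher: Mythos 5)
Your proposal is correct and follows essentially the same strategy as the paper: the paper likewise reduces to showing that the global maximum of $|E(x)|x^{2/5}$ occurs in $0<x<1$ (where $E$ is the elementary function you describe), checks the unit intervals $[n,n+1)$ for $1\le n\le 7$ by direct computation, and disposes of the tail $[8,\infty)$ using the estimates from Riesel and Vaughan's proof of their Lemma 1. The only practical difference is that your cruder tail bound $|E(x)|x^{2/5}\le 3c\,x^{-1/10}$ forces a much larger crossover point $X_0$ (on the order of $10^4$ rather than $8$), which still leaves a finite and entirely tractable, if larger, interval-arithmetic computation on the middle range.
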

\begin{proof}
We proceed as in the proof of Lemma 1 in \cite{RV}. There, the authors consider three ranges, $x\geq 2$, $1\leq x <2$ and $0< x <1$. The idea with such a proof is by considering sufficiently many ranges, one can show that the global maximum of $|E(x)|x^{\alpha}$ occurs in $0<x<1$. By reducing  $\alpha$ we reduce this maximum value.  We find that writing $(1, \infty)$ as the union of $[n, n+1)$ for $1\leq n \leq 7$ and $[8, \infty]$ keeps the other contributions sufficiently small and establishes the lemma.
\end{proof}
We remark that the proof is easily adaptable to finding, for a given $\alpha$, the optimal constant $c= c(\alpha)$ such that $|E(x)|x^{\alpha} \leq c$ for all $x>0$. However, as we show in \S \ref{jute}, the effects of further improvements are minimal.

\subsection{A large sieve inequality}\label{band}

Riesel and Vaughan make use of the following, which is Corollary 1 in \cite{MV}.

\begin{thm}[Montgomery and Vaughan]\label{MVT1}
Let $\mathcal{N}$ be a set of $Z$ integers contained in $[M+1, M+N]$. Let $\omega(p)$ denote the number of residue classes mod $p$ that contain no element of $N$. Then $Z\leq L^{-1}$, where
\begin{equation}\label{MVT}
L = \sum_{q\leq z} \left( N+ \frac{3}{2} qz\right)^{-1} \mu^{2}(q) \prod_{p|q} \frac{\omega(p)}{p - \omega(p)},
\end{equation}
where $z$ is any positive number.
\end{thm}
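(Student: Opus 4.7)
The statement is the classical Montgomery--Vaughan large sieve in its weighted form, and the plan is to derive it from the analytic (Parseval-type) large sieve combined with the multiplicative structure of the sifting problem.

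First I would establish a weighted analytic large sieve inequality of the shape
\begin{equation*}
\sum_{q \leq z} \frac{1}{N + \tfrac{3}{2}qz} \sum_{\substack{a \bmod q \\ (a,q)=1}} |S(a/q)|^2 \leq \sum_{M < n \leq M+N} |a_n|^2,
\end{equation*}
valid for an arbitrary sequence $\{a_n\}$ supported on $[M+1, M+N]$, where $S(\alpha) = \sum_n a_n e(n\alpha)$. The approach is to construct a non-negative Fourier majorant whose transform is concentrated near the Farey fractions of height $z$, and to apply Parseval's identity together with the standard spacing estimate $\|a/q - a'/q'\| \geq 1/(qq')$ for distinct Farey fractions. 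The constant $\tfrac{3}{2}$ emerges from the quantitative form of this spacing estimate as sharpened by Montgomery and Vaughan.

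Next I would specialise to the characteristic sequence of $\mathcal{N}$, taking $a_n = 1$ for $n \in \mathcal{N}$ and $a_n = 0$ otherwise, so that $\sum_n |a_n|^2 = Z$. I would then invoke the standard multiplicativity argument: the hypothesis on $\omega(p)$ combined with the Chinese Remainder Theorem implies that for squarefree $q$ the set $\mathcal{N}$ avoids at least $\prod_{p|q} \omega(p)$ of the residue classes modulo $q$. A Cauchy--Schwarz argument on $S(a/q)$ going back to R\'enyi then gives
\begin{equation*}
\sum_{\substack{a \bmod q \\ (a,q)=1}} |S(a/q)|^2 \geq Z^2 \mu^2(q) \prod_{p|q} \frac{\omega(p)}{p - \omega(p)}.
\end{equation*}
Substituting this into the weighted inequality above and dividing by $Z$ yields $Z \cdot L \leq 1$, which is the desired bound $Z \leq L^{-1}$.

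The hard part is the first step: establishing the weighted large sieve inequality with the precise constant $\tfrac{3}{2}$. The multiplicative Cauchy--Schwarz deduction and the specialisation to indicator sequences are routine, but the sharp Fourier majorant construction is the main technical content of the Montgomery--Vaughan paper \cite{MV}; in practice I would import their Theorem~1 as a black box and derive the corollary via the steps above, rather than reconstructing the majorant from scratch.
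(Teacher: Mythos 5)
Your outline is correct and is essentially the argument Montgomery and Vaughan themselves use: the paper offers no proof of this statement, simply importing it as Corollary 1 of \cite{MV} (noting it rests on the generalized Hilbert inequality of \cite{MV2}, later sharpened by Preissmann), and there the result is obtained exactly as you describe, by combining the weighted analytic large sieve $\sum_{q\leq z}(N+\tfrac{3}{2}qz)^{-1}\sum_{(a,q)=1}|S(a/q)|^2\leq\sum_n|a_n|^2$ over Farey fractions with the R\'enyi--type lower bound $\sum_{(a,q)=1}|S(a/q)|^2\geq Z^2\mu^2(q)\prod_{p\mid q}\omega(p)/(p-\omega(p))$ for the indicator sequence of $\mathcal{N}$. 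Your identification of the Fourier-majorant/Hilbert-inequality step as the genuine technical content, to be cited rather than reproved, is exactly how the present paper treats it.
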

Actually, Theorem \ref{MVT1} is derived from the investigations of Montgomery and Vaughan into Hilbert's inequality. Specifically, Theorem \ref{MVT1} follows from Theorem 1 in \cite{MV2}. That result was improved by Preissmann \cite{Prize}. The upshot of all this is that Preissmann's work allows one to take $\rho = \sqrt{1 + 2/3 \sqrt{6/5}} \approx 1.315\ldots$ in place\footnote{\label{links}We remark that Selberg conjectured that (\ref{MVT}) holds with $1$ in place of $3/2$. It seems difficult to improve further on Preissmann's work.}  of $3/2$ in (\ref{MVT}). 

Riesel and Vaughan choose $z = (2x/3)^{1/2}$ in (\ref{MVT}). With Preissman's improvement we set $z = (x/\rho)^{1/2}$; it is trivial to trace through the concomitant improvements.

\section{Riesel and Vaughan's bounds on $\pi_{2}(x)$}\label{RVsec}
Riesel and Vaughan give a method to bound $\pi_{2}(x)$. Actually, their method is much more general and can bound the number of primes $p\leq x$ such that $ap+b$ is also prime. We present below their method for the case of interest to us, namely, that of $a=1, b=2$.

We first let $C$ denote the twin prime constant 
\begin{equation}\label{milk}
C = 2 \prod_{p>2} \frac{p(p-2)}{(p-1)^{2}}.
\end{equation}
Note that in some sources the leading factor of $2$ may be absent. Wrench \cite{Wrench} computed $C$ to 45 decimal places. For our purposes the bound given by Riesel and Vaughan below is sufficient
\begin{equation*}
1.320323<C<1.320324.
\end{equation*}

\begin{lem}\label{lem:pi2}
For any $s>-1/2$ we define $H(s)$ by
\begin{equation*}
H(s) = \sum_{n=1}^{\infty} \frac{|g(n)|}{n^{s}},
\end{equation*}
where $g(n)$ is a multiplicative function defined by
\begin{equation*}
\begin{split}
g(p^{k}) &= 0 \;\textrm{for}\; k>3, \quad g(2) = 0, \quad g(4) = -3/4, \quad g(8) = 1/4,\\
g(p) &= \frac{4}{p(p-2)}, \quad g(p^{2}) = \frac{-3p -2}{p^{2}(p-2)}, \quad g(p^{3}) = \frac{2}{p^{2}(p-2)}, \quad (\textrm{when}\; p>2).
\end{split}
\end{equation*}

Now define the constants $A_i$ by
\begin{equation*}
\begin{split}
A_{6} &= 9.27436 - 2 \log \rho \\ 
A_{7} &= -5.6646 + \log^{2} \rho  - 9.2744\log \rho\\
A_{8} &= 16Cc(\alpha)H(-\alpha) \rho^{\alpha/2}\\
A_{9} &= 24.09391\rho^{1/2},
\end{split}
\end{equation*}
where $c(\alpha)$ is such that $|E(x)| x^{\alpha} \leq c(\alpha)$ for all $x>0$.

Now let
\begin{equation}\label{coffee}
F(x) = \max\left\{0, A_{6} + \frac{A_{7}}{\log x} - \frac{A_{8}}{x^{\alpha/2} \log x} - \frac{A_{9}}{x^{1/2} \log x}\right\}.
\end{equation}

Then
\begin{equation}\label{strainer}
\pi_{2}(x) < \frac{8Cx}{(\log x)(\log x + F(x))} + 2x^{1/2}.
\end{equation}
\begin{proof}
See \cite{RV}, equation ($3.20$).
\end{proof}
\end{lem}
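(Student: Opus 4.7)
The plan is to apply the large-sieve inequality of Theorem \ref{MVT1} (with Preissmann's constant $\rho$ in place of $3/2$, as described in Section \ref{band}) to the set $\mathcal{N} = P_2 \cap (x^{1/2}, x]$ of twin primes in that range; primes $p \leq x^{1/2}$ contribute at most the $2x^{1/2}$ term in (\ref{strainer}) by a trivial bound. For the sieve inputs, the forbidden residues mod an odd prime $q$ are $0$ and $-2$, so $\omega(q)=2$, while $\omega(2)=1$ since $p$ must be odd. Choosing $z = (x/\rho)^{1/2}$ -- the natural substitute for Riesel and Vaughan's $(2x/3)^{1/2}$ -- and writing $f(q) := \mu^2(q) \prod_{p \mid q} \omega(p)/(p-\omega(p))$, Theorem \ref{MVT1} yields
\begin{equation*}
\pi_2(x) - 2 x^{1/2} \leq L^{-1}, \qquad L = \sum_{q \leq z} \frac{f(q)}{x + \rho q z}.
\end{equation*}

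The next step is to estimate $\sum_{q \leq y} f(q)$ accurately for $y \leq z$. An Euler product computation shows that the generating factor of $f$ at each prime $p>2$ is $1 + 2/((p-2)p^s)$, and comparison with $\zeta(s+1)^2$ together with the twin prime Euler factors $p(p-2)/(p-1)^2$ of (\ref{milk}) yields a multiplicative correction factor which one can check is exactly the Euler factor of $g$ defined in the statement. The corresponding Dirichlet convolution identity expresses $\sum_{q \leq y} f(q)$ as a sum of translates of $\sum_{m \leq y/n} d(m)/m$ against the coefficients of $g$. Substituting the refined formula (\ref{queen}) together with $|E(\cdot)| \leq c(\alpha)(\cdot)^{-\alpha}$ from Lemma \ref{tea} produces a main term of the expected shape $8C\bigl(\tfrac{1}{2}\log^2 y + 2\gamma_0 \log y + \gamma_0^2 - 2\gamma_1\bigr)$ and an error term controlled explicitly by $8C \cdot c(\alpha) H(-\alpha) y^{-\alpha}$, with $H(s)$ as in the statement.

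Inserting this asymptotic into $L$ via partial summation against the kernel $(x+\rho q z)^{-1}$ and substituting $z = (x/\rho)^{1/2}$ (so that $\log z = \tfrac{1}{2}(\log x - \log \rho)$ and $z^{-\alpha} = \rho^{\alpha/2} x^{-\alpha/2}$) yields a lower bound for $L$ of the shape $(\log x)(\log x + F(x))/(8Cx)$. The reshuffled $\tfrac{1}{2}\log^2$ and Stieltjes contributions assemble into $A_6 = 9.27436 - 2\log\rho$ and $A_7 = -5.6646 + \log^2\rho - 9.2744\log\rho$; the $E$-error contributes $A_8 = 16Cc(\alpha)H(-\alpha)\rho^{\alpha/2}$ (the factor $\rho^{\alpha/2}$ coming precisely from $z^{-\alpha}$); the tail contribution at $q \approx z$ yields $A_9 = 24.09391\rho^{1/2}$. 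The $\max\{0, \cdot\}$ in (\ref{coffee}) merely enforces a non-vacuous bound. Inverting and adding back the $2x^{1/2}$ from small primes produces (\ref{strainer}).

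The main obstacle is not conceptual but bookkeeping: every constant must be traced from its origin (the choice of $z$, Preissmann's $\rho$, Lemma \ref{tea}, the Stieltjes bounds (\ref{dodo}), and Wrench's evaluation of $C$) through the partial summation and the Euler-product manipulation to the final form of (\ref{coffee}). The skeleton is Riesel and Vaughan's own derivation of their equation (3.20); the present formulation simply substitutes $\rho$ for $3/2$ and permits a flexible pair $(c(\alpha), \alpha)$ in place of the fixed $(1.641, 1/3)$.
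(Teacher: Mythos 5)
Your proposal is correct and takes essentially the same route as the paper, whose entire proof is a citation of Riesel and Vaughan's equation (3.20) together with the remark in \S\ref{band} that one substitutes $z=(x/\rho)^{1/2}$ and traces Preissmann's $\rho$ through their argument; your sketch accurately reconstructs that derivation, including the correct sieve data $\omega(2)=1$, $\omega(p)=2$, the convolution against $\sum_{m\le y/n}d(m)/m$ that produces $g$ and $H(-\alpha)$, and the provenance of the factors $\rho^{\alpha/2}$ and $\rho^{1/2}$ in $A_8$ and $A_9$. The only blemish is the constant you attach to the main term of $\sum_{q\le y}f(q)$, which should be of size $1/(2C)$ rather than $8C$ (the $8C$ appears only after inverting $L$), but this is precisely the bookkeeping you already flag as the remaining work.
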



This leads directly to the following lemma.

\begin{lem}\label{lem:B}
Let $F(x)$ be as defined in (\ref{coffee}). Chose $x_0$ large enough so that $F(x_0)>0$ and set
$$
B(x_0)=\sum\limits_{\substack{p\in P_2\\p\leq x_0}} \frac{1}{p}+\frac{1}{p+2}.
$$
Then
$$
B\leq B(x_0)-2\frac{\pi_2(x_0)}{x_0}+\int\limits_{x_0}^\infty\frac{16C}{t\log(t)(\log(t)+F(t))}+4t^{-\frac32}\D t.
$$
\begin{proof}
We start from
$$
B\leq B(x_0)+\sum\limits_{\substack{p\in P_2\\p>x_0}}\frac{2}{p}=B(x_0)+2\int\limits_{x_0}^\infty \frac{\D\pi_2(t)}{t},
$$
integrate by parts and apply Lemma \ref{lem:pi2}.
\end{proof} 
\end{lem}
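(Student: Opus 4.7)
The plan is to follow the outline given in the lemma's own sketch: bound the tail contribution of twin primes beyond $x_0$ by a Stieltjes integral against $\D\pi_2$, integrate by parts, and substitute the explicit bound (\ref{strainer}) from Lemma \ref{lem:pi2}.

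First, for every twin prime $p > x_0$ the pair of reciprocals $1/p + 1/(p+2)$ is bounded above by $2/p$, so
$$B - B(x_0) \le 2\sum_{\substack{p \in P_2 \\ p > x_0}} \frac{1}{p} = 2\int_{x_0}^\infty \frac{\D\pi_2(t)}{t}.$$
Integration by parts in the Riemann--Stieltjes sense, with $u(t) = 1/t$ and $\D v = \D\pi_2(t)$, gives
$$\int_{x_0}^\infty \frac{\D\pi_2(t)}{t} = \lim_{T\to\infty}\frac{\pi_2(T)}{T}-\frac{\pi_2(x_0)}{x_0} + \int_{x_0}^\infty \frac{\pi_2(t)}{t^2}\,\D t,$$
and the boundary contribution at infinity vanishes because (\ref{strainer}) already forces $\pi_2(t)/t = O(1/\log^2 t)$.

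Next, I would insert the bound of Lemma \ref{lem:pi2} directly into the remaining integrand. Dividing (\ref{strainer}) by $t$ yields
$$\frac{\pi_2(t)}{t^2} < \frac{8C}{t\log(t)(\log(t) + F(t))} + 2t^{-3/2},$$
and the right-hand side is integrable on $[x_0, \infty)$: the first term is $O(1/(t\log^2 t))$ since $F(t) \ge 0$ by construction, and the second is a convergent power. Multiplying by $2$ and recombining with the tail bound produces exactly the claimed inequality.

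There is no real technical obstacle here; the argument is a routine Abel-summation manipulation. The only point worth flagging is that the hypothesis $F(x_0) > 0$ is not actually needed for the inequality to hold (since the clipping at $0$ in (\ref{coffee}) already guarantees a well-defined, integrable integrand); it matters only later, when one wants the resulting upper bound to beat the trivial $16C/(t\log^2 t)$ estimate during the numerical optimisation in \S\ref{chalk}.
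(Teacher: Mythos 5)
Your argument is correct and is exactly the paper's proof, just written out in full: the paper's own justification is the same three-step sketch (bound the tail by $2\sum_{p>x_0}1/p$, write it as a Stieltjes integral, integrate by parts and insert (\ref{strainer})), and your expansion of the integration by parts, the vanishing boundary term at infinity, and the substitution of Lemma \ref{lem:pi2} all check out. The only nitpick is the phrase ``dividing (\ref{strainer}) by $t$'' where you mean dividing by $t^2$ (the displayed inequality that follows is the correct one), and your closing observation that $F(x_0)>0$ is needed only for the bound to be useful, not for its validity, is accurate.
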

Riesel and Vaughan calculate $H(-1/3)$ so that they may use (\ref{king}); we proceed to give an upper bound for $H(-2/5)$ in order to use Lemma \ref{tea}.

\begin{lem}\label{lem:H}
Let $H$ be as defined above, then
$$
H\left(-\frac25\right)<950.05.
$$
\begin{proof}
Write
$$
g(2,s)=\log\left(1+\frac{3}{4}2^{-2s}+\frac{1}{4}2^{-3s}\right)
$$
and for $t>2$
$$
g(t,s)=\log\left(1+\frac{4}{t(t-2)}t^{-s}+\frac{3t+2}{t^2(t-2)}t^{-2s}+\frac{2}{t^2(t-2)}t^{-3s}\right)
$$
so that for $s>-1/2$ we have the Euler product
$$
H(s)=\exp\left[\sum\limits_p g(p,s)\right].
$$
Now fix $P>2$ and split the sum into
$$
S_1(P,s)=\sum\limits_{p\leq P} g(p,s)
$$
and
$$
S_2(P,s)=\sum\limits_{p>P}g(p,s).
$$
Then by direct computation using interval arithmetic we find
$$
S_1\left(10^{10},-\frac{2}{5}\right)=6.8509190277\ldots
$$
To estimate $S_2$ we write
$$
\sum\limits_{p>P}g(p,s)=\int\limits_P^\infty g(t,s)\D \pi(t)\leq \int\limits_P^\infty \log\left(1+k_1 t^{-\frac65}\right)\D \pi(t)
$$
where $k_1$ is chosen so that $\log\left(1+k_1 t^{-\frac65}\right)\leq g\left(P,-\frac{2}{5}\right)$. For $P=10^{10}$ we find that $k_1=3.000402$ will suffice. We then integrate by parts to get
$$
S_2\left(P,-\frac25\right)\leq -\log\left(1+k_1 t^{-\frac65}\right)\pi(P)+\int\limits_P^\infty\frac65\frac{k_1}{t^{11/5}+k_1t}\pi(t)\D t.
$$

We compute the first term using $\pi\left(10^{10}\right)=455\,052\,511$ and for the second term we note that for $x\geq P$ we have
$$
\pi(x)\leq\frac{x}{\log x}\left(1+\frac{1.2762}{\log P}\right)=k_2\frac{x}{\log x}.
$$
The integral is now
$$
\frac{6}{5}k_1k_2\int\limits_P^\infty\frac{\D t}{\log t\left(t^{6/5}+k_1\right)}\leq\frac{6}{5}k_1k_2\int\limits_P^\infty\frac{\D t}{t^{6/5}\log t}=-\frac{6}{5}k_1k_2\textrm{Ei}\left(-\frac{\log P}{5}\right)
$$
where $\textrm{Ei}$ is the exponential integral
$$
\textrm{Ei}(x)=-\int\limits_{-x}^\infty\frac{\exp(-t)}{t}\D t.
$$
Putting this all together we have
$$
S_1\left(10^{10},-\frac{2}{5}\right)+S_2\left(10^{10},-\frac{2}{5}\right)< 6.8509191-0.0013653+0.0069531=6.8565069
$$
and thus $H\left(-\frac25\right)<950.05$.
\end{proof}
\end{lem}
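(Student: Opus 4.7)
The plan is to exploit the multiplicativity of $|g|$. Since $g$ is multiplicative with $g(p^k)=0$ for $k>3$, the same is true of $|g|$, so $H(s)$ factors as an Euler product whose local factor at each prime has only four nonzero terms. Taking logarithms converts this into a sum over primes, $\log H(s)=\sum_{p}\ell(p,s)$, where
\begin{equation*}
\ell(p,s)=\log\Bigl(1+|g(p)|p^{-s}+|g(p^{2})|p^{-2s}+|g(p^{3})|p^{-3s}\Bigr).
\end{equation*}
At $s=-2/5$, inspection of the three $|g(p^{k})|$ values shows that the dominant contribution as $p\to\infty$ comes from the middle term $|g(p^{2})|p^{4/5}\sim 3p^{-6/5}$, which is summable, so $H(-2/5)$ is finite.

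My next step is a split at a threshold $P$, with $S_{1}=\sum_{p\leq P}\ell(p,-2/5)$ computed by direct enumeration with rigorous interval arithmetic, and $S_{2}=\sum_{p>P}\ell(p,-2/5)$ bounded analytically. The target $H(-2/5)<950.05$ forces $\log H(-2/5)<6.857$, which leaves very little slack, so $P$ must be chosen large enough to make the tail bound essentially negligible; $P=10^{10}$ is a natural choice, since $\pi(P)$ is a verified integer and the enumeration, while expensive, is feasible.

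For the tail, the plan is to majorise $\ell(p,-2/5)$ uniformly for $p>P$ by a one-term expression $\log(1+k_{1}t^{-6/5})$, where $k_{1}$ is slightly larger than $3$ and chosen so that equality holds at $t=P$; monotonicity of the appropriate rational function will then give the bound for all $t>P$. Writing the tail as a Stieltjes integral against $\D\pi(t)$ and integrating by parts yields
\begin{equation*}
S_{2}\leq -\log\bigl(1+k_{1}P^{-6/5}\bigr)\pi(P)+\int_{P}^{\infty}\frac{\frac{6}{5}k_{1}}{t^{11/5}+k_{1}t}\,\pi(t)\,\D t.
\end{equation*}
Inserting an explicit Dusart-type upper bound $\pi(t)\leq k_{2}t/\log t$ valid for $t\geq P$ reduces the integral to one of the form $\int_{P}^{\infty}\D t/(t^{6/5}\log t)$, which, after the substitution $u=\log t/5$, is expressible in closed form via the exponential integral.

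The main obstacle will be controlling constants sharply enough to land under the tight ceiling $6.857$. Two points require care: first, the rigorous verification that the simple majorant $k_{1}t^{-6/5}$ dominates the bracketed three-term local contribution for every $t>P$, which reduces to checking monotonicity of a concrete rational function of $t$; and second, ensuring the interval-arithmetic computation of $S_{1}$ over primes up to $10^{10}$ is both tractable and rigorous. Once both are secured, combining $S_{1}$ with the closed-form tail bound and exponentiating gives $H(-2/5)<\exp(6.857)<950.05$.
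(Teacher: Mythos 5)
Your proposal matches the paper's proof essentially step for step: the same Euler-product/logarithm reduction, the same split at $P=10^{10}$ with $S_{1}$ computed by interval arithmetic, the same majorant $\log(1+k_{1}t^{-6/5})$ with $k_{1}$ just above $3$ (reflecting the dominant $|g(p^{2})|p^{4/5}\sim 3p^{-6/5}$ term), integration by parts against $\pi(t)$, an explicit $\pi(t)\leq k_{2}t/\log t$ bound, and evaluation of the tail via the exponential integral. No substantive differences to report.
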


\section{Calculations}\label{chalk}
We now have everything we require to prove Theorem \ref{jack}. We first proceed to the lower bound.
\subsection{Computing $B(4\cdot 10^{18})$: the lower bound in Theorem \ref{jack}}\label{compB}

We first note the following.
\begin{lem}\label{lem:pi2x0}
We have
$$
\pi_2\left(4\cdot 10^{18}\right)=3\,023\,463\,123\,235\,320.
$$
\begin{proof}
See \cite{Silva}, table ``2d15.txt''.
\end{proof}
\end{lem}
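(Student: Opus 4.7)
The lemma is a concrete numerical count with $\pi_2$ evaluated at $4\cdot 10^{18}$, so its proof is inherently computational rather than analytic; no amount of sieve theory will pin down such a number exactly. My plan is therefore to defer to an authoritative large-scale enumeration. Oliveira e Silva has published \cite{Silva} extensive tables of $\pi_2$; in particular the file \texttt{2d15.txt} records $\pi_2(k\cdot 10^{15})$ for $k$ up to $4000$, and reading off the $k=4000$ entry (equivalently $x=4\cdot 10^{18}$) yields precisely $3\,023\,463\,123\,235\,320$.

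If one wished to supply independent verification rather than cite, the natural route would be a segmented sieve of Eratosthenes tuned to twin-prime counting. The interval $[1,4\cdot 10^{18}]$ is broken into blocks sized to fit in cache; each block is sieved against the primes up to $\sqrt{4\cdot 10^{18}}\approx 2\cdot 10^{9}$ (themselves produced by a smaller preliminary sieve); and one then tallies the pairs $(p,p+2)$ both surviving, being careful that twins straddling consecutive blocks are not double-counted or missed. Confidence can be built up by checking the partial totals $\pi_2(k\cdot 10^{15})$ for $k<4000$ against Silva's other tabulated entries.

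The obstacle is not conceptual but logistical: even a well-optimised implementation needs on the order of $10^{18}$ elementary sieving operations and sustained access to the primes below $2\cdot 10^{9}$, which is why such a computation is a substantial undertaking (Silva's own enumeration ran on distributed hardware over a long period) and not something to redo inside a short paper. For the purposes here the pragmatic and honest step is to quote Silva's tables, which are public and have been used throughout the literature.
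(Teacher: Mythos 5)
Your proof is the same as the paper's: both simply read the value off the $k=4000$ entry of Oliveira e Silva's table ``2d15.txt'' in \cite{Silva}, since $4000\cdot 10^{15}=4\cdot 10^{18}$. The additional remarks on how one would independently reverify by segmented sieving are sensible but go beyond what the paper does.
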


Furthermore, typical entries in the tables in \cite{Silva} (``2d12.txt'' for this example) look like

$$
1000d12\;\;\;\;  1177209242304\;\;\;\;  1177208491858.251\ldots
$$
$$
1001d12\;\;\;\;  1178316017996\;\;\;\;  1178315253072.811\ldots
$$
where the second column gives the count of prime pairs below the value given in the first column, interpreting, for example, ``1001d12'' as $1001\cdot 10^{12}$. From this we conclude that there are $1\,178\,316\,017\,996-1\,177\,209\,242\,304=1\,106\,775\,692$ prime pairs between $1000\cdot 10^{12}$ and $1001\cdot 10^{12}$. The contribution these will make to the constant $B$ is at least
$$
1\,106\,775\,692\times\frac{2}{1001\cdot 10^{12}}>1.0567\cdot 10^{-6}
$$
and at most
$$
1\,106\,775\,692\times\frac{2}{1000\cdot 10^{12}}<1.0678\cdot 10^{-6}.
$$

We take the value of $B(10^{12})\in[1.8065924,1.8065925]$ from \cite{Nicely} and add on the contributions from the entries in the tables from \cite{Silva} to conclude the following.
\begin{lem}\label{lem:B_bound}
$$
B\left(4\cdot 10^{18}\right)\in[1.840503,1.840518].
$$
\end{lem}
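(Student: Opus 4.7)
The plan is to start from the high-precision value $B(10^{12})\in[1.8065924,1.8065925]$ of \cite{Nicely} and account, with rigorous interval arithmetic, for the contribution to $B(\,\cdot\,)$ of every twin prime pair with $10^{12}<p\le 4\cdot 10^{18}$. The tables of \cite{Silva} furnish $\pi_2(k\cdot 10^n)$ at many grid points, so we can partition $(10^{12},4\cdot 10^{18}]$ into blocks $(a_j,b_j]$ whose endpoints appear in those tables; then $N_j:=\pi_2(b_j)-\pi_2(a_j)$ is the exact number of twin prime pairs in block $j$.

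For each block, every term $1/p+1/(p+2)$ with $a_j<p\le b_j$ satisfies
\[
\frac{2}{b_j+2}\;\le\;\frac{1}{p}+\frac{1}{p+2}\;\le\;\frac{2}{a_j+1},
\]
so the total contribution of the block lies in $\bigl[\,N_j\cdot 2/(b_j+2),\;N_j\cdot 2/(a_j+1)\,\bigr]$, exactly as in the worked example $1000\cdot 10^{12}<p\le 1001\cdot 10^{12}$ shown above the lemma. A natural partition is to use the $n=12$ table (step $10^{12}$, with $k=1000,\ldots,10000$) to cover $(10^{12},10^{16}]$, and then the $n=15$ table (step $10^{15}$, with $k=10,\ldots,4000$) to cover $(10^{16},4\cdot 10^{18}]$, giving roughly $1.3\cdot 10^4$ blocks whose block counts $N_j$ are read directly off \cite{Silva}.

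Summing the per-block lower and upper bounds and adding the interval $[1.8065924,1.8065925]$ for $B(10^{12})$ produces rigorous enclosures for $B(4\cdot 10^{18})$; the claim is then the assertion that this enclosure sits inside $[1.840503,1.840518]$. The only non-routine point is the book-keeping: the $\sim 10^4$ interval sums must be performed with outward rounding (e.g.\ via MPFI or Arb) so that accumulated rounding does not swamp the seventh decimal digit. Because each block contributes only a few times $10^{-6}$ to $B$ and the width $(b_j-a_j)/a_j$ of each block is at most $10^{-3}$ (for $n=12$) or $10^{-3}$ (for $n=15$) in the worst early case and much smaller later, a standard double-precision interval computation easily delivers the six-digit bounds stated in the lemma.
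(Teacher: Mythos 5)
Your proposal matches the paper's own argument: the authors likewise start from Nicely's enclosure $B(10^{12})\in[1.8065924,1.8065925]$, read block counts $\pi_2(b_j)-\pi_2(a_j)$ off Oliveira e Silva's $n=12$ and $n=15$ tables, and bound each block's contribution between $N_j\cdot 2/b_j$ and $N_j\cdot 2/a_j$, exactly as in the worked $1000\cdot10^{12}$ to $1001\cdot10^{12}$ example. The approach and the book-keeping are essentially identical, so the proof is correct as written.
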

We note that the lower bound in Theorem \ref{jack} follows from Lemma \ref{lem:B_bound}. We note further that we are `off' by at most $1.5\cdot 10^{-5}$, which shows that there is limited applicability for a finer search of values of $\pi_{2}(x)$ for $x\leq 4\cdot 10^{18}$.

\subsection{The upper bound in Theorem \ref{jack}}
We shall use Lemma \ref{lem:B} to bound $B$.
Using $s=2/5$  to get $H(-2/5) < 950.05$ (Lemma \ref{lem:H}) and  $c(2/5)<1.0503$ (Lemma \ref{tea}) we get

\begin{equation*}
A_{6} > 8.72606, \quad A_{7} > -8.13199, \quad 
A_{8} < 22267.54, \quad 
A_{9} < 27.63359.
\end{equation*}

We chose $x_0=4\cdot 10^{18}$ so that $\pi_2(x_0)=3\,023\,463\,123\,235\,320$ (Lemma \ref{lem:pi2x0}) and $B(x_0)<1.840518$ (Lemma \ref{lem:B_bound}). This leaves the evaluation of
$$
\int\limits_{x_0}^\infty \frac{\D t}{t\log t(F(t)+\log t)}.
$$

We proceed using rigorous quadrature via the techniques of Molin \cite{Molin} implemented using the ARB package \cite{ARB} to compute
$$
\int\limits_{x_0}^{\exp(20\,000)} \frac{\D t}{t\log t(F(t)+\log t)}
$$

and then we bound the remainder by

$$
\int\limits_{\exp(20\,000)}^\infty \frac{\D t}{t\log t(F(t)+\log t)}\leq\int\limits_{\exp(20\,000)}^\infty \frac{\D t}{t\log^2 t}=\frac{1}{20\,000}.
$$

This establishes Theorem \ref{jack}.

\subsection{Potential Improvements}\label{jute}

We close this section by considering potential improvements whilst still relying on Riesel and Vaughan's method. One approach is to attempt to improve the constants $A_i$. A second would be to compute $B(x_0)$ for larger values of $x_0$ than the $4\cdot 10^{18}$ used above.

\subsubsection{Improving the constants $A_i$}

In the following, all calculations were done with $x_{0} = 4\cdot 10^{18}$, cutting off at $\exp(20\, 000)$, and using Preissmann's value for $\rho$ in \S \ref{band}.
\begin{enumerate}
\item The `$2$' that appears in (\ref{strainer}) is a result of the term $2\pi(z) + 1$ appearing on \cite[p.\ 54]{RV}. With the choice of $z = (x/\rho)^{1/2}$, and using the bound $\pi(x) < 1.25506 x/\log x$ from Rosser and Schoenfeld \cite[(3.6)]{RS}, we could replace the $2$ by 
\begin{equation*}
 x_{0}^{-1/2} + \frac{5.03}{\rho^{1/2} \log \frac{x_{0}}{\rho}} =0.10305\ldots
 \end{equation*}
\item We can replace the constant $A_9$ by $19.638\rho^{1/2} < 22.523$ by a careful examination of the final part of the proof of Lemma 3 in \cite{RV}.
\item We could investigate other versions of Lemma \ref{tea}. This would have the affect of reducing $A_{8}$. It should be noted that for larger values of $\alpha$ one can obtain smaller constants $c(\alpha)$ at the expense of a larger, and more slowly converging, $H(-\alpha)$. We did not pursue the optimal value of $\alpha$.
\end{enumerate}
However, we observe that setting $A_6=9.27436$ (that is, assuming Selberg's conjecture, in the footnote on page \pageref{links}, that $\rho =1$) and setting $A_7=A_8=A_9=0$ and deleting the $x^{1/2}$ term from (\ref{strainer}) altogether only reduces the upper bound for $B$ to $2.28545\ldots$. 


\subsubsection{Increasing $x_0$}

Knowledge of $B(x_0)$ and $\pi_2(x_0)$ for larger $x_0$ would allow us to further improve on our bound for $B$. To quantify such improvements, recall that results such as (\ref{sock}) and (\ref{shoe}) are obtained by assuming the Hardy--Littlewood conjecture, namely
\begin{equation}\label{syrup}
\pi_{2}(x) \sim C \int_{2}^{x} \frac{dx}{\log^{2} x},
\end{equation}
(where $C$ is the twin prime constant in (\ref{milk})), 
and assuming properties on the distribution of twin primes. This leads to the hypothesis that
\begin{equation}\label{lace}
B(n) \approx B - \frac{2C}{\log n}.
\end{equation}
Using (\ref{syrup}) and (\ref{lace}), one can `predict' the value of $\pi_{2}(10^{k})$ and $B(10^{k})$ for higher values of $k$. Of course one can object at this point: we are assuming a value of $B$ in order to obtain an upper bound on $B$! A valid point, to be sure. The purpose of this commentary is instead to show that without new ideas, this current method is unlikely to yield `decent' bounds on $B$ even using infeasible computational resources.

We ran the analysis from \S \ref{chalk} (not optimised for each $k$) to obtain the following.

\begin{table}[ht]
 \centering
 \caption{Projected upper bounds on $B$}
 \medskip
 \begin{tabular}{c c c c}
   \hline

   $k$& $B(10^{k})$  & $\pi_{2}(10^{k})$ &  Upper bound for $B$ \\
   \hline
      19 & 1.84181 & $7.2376\cdot 10^{15}$ & 2.2813\\
        20 & 1.84482 & $6.5155\cdot 10^{16}$ & 2.2641\\
          80 & 1.8878 & $3.9341\cdot 10^{75}$ &1.9998\\
   \hline

   \hline
 \end{tabular}

 \end{table}
Therefore, proving even that $B<2$ is a good candidate for the 13th Labour of Hercules, a man referenced frequently in puzzles by the late Jon Borwein.

\end{document}